\newtheorem{theorem}{Theorem}[section]
\newtheorem{lemma}[theorem]{Lemma}
\theoremstyle{definition}
\newcommand{\Z}{\mathbb{Z}}
\begin{document}

\author[P.V. Danchev]{Peter V. Danchev}
\address{Institute of Mathematics and Informatics, Bulgarian Academy of Sciences, 1113 Sofia, Bulgaria}
\email{danchev@math.bas.bg; pvdanchev@yahoo.com}
\author[B. Goldsmith]{Brendan Goldsmith}
\address{Technological University, Dublin, Dublin 7, Ireland}
\email{brendan.goldsmith@TUDublin.ie; brendangoldsmith49@gmail.com}
\author[P.W. Keef]{Patrick W. Keef}
\address{Department of Mathematics, Whitman College, Walla Walla, WA 99362, USA}
\email{keef@whitman.edu}

\title[Hereditary and Super Properties in co-Bassian Groups] {Hereditarily and Super Generalized co-Bassian Abelian Groups}
\keywords{co-Bassian group, generalized co-Bassian group, hereditary property, super property}
\subjclass[2010]{20K10, 20K12, 20K20, 20K21, 20K30}

\maketitle

\begin{abstract} We completely characterize by finding necessary and sufficient conditions those co-Bassian and generalized co-Bassian Abelian groups having, respectively, the hereditary or the super property, thus giving a new insight in the full discovery of the structure of these two classes of groups as recently defined in Arch. Math. Basel (2024) by the third author.
\end{abstract}

\vskip2.0pc

\section{Introduction and Background}
	
All groups considered in this short article are additively written and Abelian, and the standard notations are mainly in agreement with those from \cite{F1} and \cite{F}. The more specific terminology is recollected as follows: mimicking \cite{K}, a group $G$ is called {\it co-Bassian} if, for all $N\leq G$, if $\phi:G\to G/N$ is an injection, then $\phi(G)=G/N$. And $G$ is called {\it generalized co-Bassian} if, for all $N\leq G$, if $\phi:G\to G/N$ is an injection, then $\phi(G)$ is a direct summand of $G/N$. Thereby, all co-Bassian groups are generalized co-Bassian. However, the converse implication is {\it not} true; even more, any co-Bassian group is co-Hopfian, but there is a generalized co-Bassian group that is {\it not} co-Hopfian (for a more detailed information, we refer to the related sources \cite{CDK} and \cite{GG1}, \cite{GG2}).

\medskip

Our objective here is to give a more deep penetrating in the complete description of the defined above co-Bassian and generalized co-Bassian groups by exploring their behavior with respect subgroups and factor-groups. So, a group is said to be {\it hereditarily co-Bassian} (resp., {\it hereditarily generalized co-Bassian}) if all its subgroups are co-Bassian (resp., generalized co-Bassian). Likewise, a group is said to be {\it super co-Bassian} (resp., {\it super generalized co-Bassian}) if all its factor-groups or, equivalently, all its epimorphic images, are co-Bassian (resp., generalized co-Bassian).

\medskip

Our work is based on the following two characterizations from \cite{K}:

\begin{theorem}\label{Keef1} (\cite[Theorem 2.6]{K})
The group $G$ is co-Bassian if, and only if, for each prime $p$, $T_p$ has finite $p$-rank and $G/T$ is a divisible group of finite torsion-free rank.
\end{theorem}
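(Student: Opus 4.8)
The plan is to prove the two implications separately, in each case reducing to the primary components $T_p$ of $T$ and to the torsion-free quotient $G/T$. For \emph{sufficiency}, assume every $T_p$ has finite $p$-rank and $G/T$ is divisible of finite rank, so $G/T\cong\Q^r$ for some $r<\infty$; let $N\leq G$ and let $\phi\colon G\to G/N$ be an injection, and I must show $\phi(G)=G/N$. If $\phi(g)$ is torsion in $G/N$, then $mg\in\Ker\phi=0$ for some $m\geq1$, so $g\in T$; hence $\phi$ induces an \emph{injection} $\bar\phi\colon G/T\hookrightarrow(G/N)/\mathrm{tor}(G/N)$. The target is a torsion-free divisible quotient of $G/T\cong\Q^r$, so it is $\cong\Q^s$ with $s\leq r$; since $\Q^r$ embeds in it, $s=r$ and $\bar\phi$ is an isomorphism. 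Writing $P$ for the full preimage in $G$ of $\mathrm{tor}(G/N)$, we get $G/P\cong\Q^r\cong G/T$ with $T\subseteq P$; comparing torsion-free ranks shows $P/T$ is torsion, hence zero, so $P=T$, $N\subseteq T$, and $\mathrm{tor}(G/N)=T/N$. Putting $N_p=N\cap T_p$, the restriction of $\phi$ gives an injection $T_p\hookrightarrow T_p/N_p$ for each $p$.

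The crux of sufficiency is the claim that \emph{a $p$-group $A$ of finite $p$-rank which embeds into a quotient $A/B$ of itself must do so onto}. Since $\rk_p$ does not increase under quotients of $p$-groups (compare socles), $A/B$ has the same $p$-rank $n$ as $A$; write $A=\qc^{k}\oplus F$ with $F$ finite. As $A/B$ is a quotient of $A$ its maximal divisible subgroup has rank $\leq k$, whereas the embedding maps the divisible part $\qc^k$ of $A$ into it; hence the two ranks agree and the embedding carries $\qc^k$ onto the maximal divisible subgroup $D(A/B)$. Passing to the reduced quotients, the embedding then induces an injection of $F$ into $(A/B)/D(A/B)$, which is itself a quotient of $F$ and so has order $\leq|F|$; thus this injection, and hence the original embedding, is onto. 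Applying the claim to each $T_p\hookrightarrow T_p/N_p$ shows $\phi$ carries $T$ onto $T/N=\mathrm{tor}(G/N)$, and together with $\bar\phi$ an isomorphism an elementary diagram chase yields $\phi(G)=G/N$.

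For \emph{necessity} I argue by contraposition, using that every co-Bassian group is co-Hopfian (so I may invoke the known structure of co-Hopfian groups, in particular that each of their primary torsion components has finite $p$-rank). If some $T_p$ has infinite $p$-rank, then $G$ fails to be co-Hopfian: if the divisible part of $T_p$ has infinite rank, $\qc^{(\w)}$ is a direct summand of $G$ and its shift, extended by the identity, is an injective non-surjective endomorphism; otherwise a basic subgroup of the reduced part of $T_p$ is infinite, giving a \emph{pure} subgroup $S=\bigoplus_{i<\w}\langle x_i\rangle$ of $G$, and using purity of $S$ in $G$ one builds an injection $\phi\colon G\to G/\langle x_0\rangle$ that restricts to a height-shift on $S$ absorbing the kernel but misses, say, the class of $x_1$. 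So assume each $T_p$ has finite $p$-rank; then $T_p=\qc^{k_p}\oplus(\text{finite})$ is a direct summand, $G=T_p\oplus G'$. If $G/T$ is not divisible, choose $p$ with $pG+T\neq G$; then $G'$ is $p$-torsion-free with $pG'\neq G'$, so the endomorphism equal to the identity on $T_p$ and to multiplication by $p$ on $G'$ is injective but not onto, contradicting co-Hopficity; hence $G/T$ is divisible. Finally, if $G/T$ is divisible of \emph{infinite} torsion-free rank, after splitting off the divisible part of $G$ we may take $G$ reduced, and then, exploiting $\Q^{(\kappa)}\cong\Q^{(\kappa)}\oplus\Q$ together with a surjection $G\twoheadrightarrow G/T=\Q^{(\kappa)}$, one seeks a subgroup $N$ for which $G/N$ contains a copy of $G$ properly (morally, $G/N$ an extension of $\Q$ by $G$ with $G$ as kernel), so that $G$ is not co-Bassian.

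I expect this last step to be the main obstacle. Unlike the other failure modes, infinite torsion-free rank of $G/T$ cannot be detected through co-Hopficity alone --- for instance $\prod_p\Z/p$ is co-Hopfian while its torsion-free quotient has infinite rank --- so the witnessing injection $G\hookrightarrow G/N$ must use a nonzero $N$; pinning down such an $N$ in general, equivalently showing that a reduced group with divisible torsion-free quotient of infinite rank always embeds properly into one of its quotients, is the delicate point. The constructions in the infinite-$p$-rank case, by contrast, are routine once the purity of the relevant direct sum of cyclics is used to control the extension.
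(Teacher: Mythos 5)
This statement is quoted from \cite[Theorem 2.6]{K}; the present paper offers no proof of it, so your attempt can only be judged on its own terms. The sufficiency half of your argument is essentially complete and correct: showing that an injection $\phi:G\to G/N$ forces $N\leq T$ and induces an isomorphism on $G/T\cong \Q^r$, and then running a divisible-part-plus-finite-part count on each finitely cogenerated $T_p=\qc^{k}\oplus F$, does yield $\phi(G)=G/N$.

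The necessity half, however, has two genuine gaps. First, in the case where $T_p$ has infinite rank but its divisible part does not, you propose to define a height-shift on a pure subgroup $S=\bigoplus_{i<\w}\langle x_i\rangle$ and to extend it to an injection $G\to G/\langle x_0\rangle$ ``using purity of $S$ in $G$.'' Purity alone does not permit such an extension: a homomorphism defined on a pure subgroup extends over the ambient group only when the codomain is pure-injective, and $G/\langle x_0\rangle$ is not. The extension is automatic when $S$ can be taken to be a direct summand (e.g.\ when some $\Z(p^n)^{(\w)}$ or $\qc^{(\w)}$ splits off), but when the basic subgroup of the reduced part of $T_p$ has unbounded orders each occurring with finite multiplicity, producing the required injective non-surjective map into a quotient of the whole mixed group $G$ is exactly the hard step, and the appeal to ``the known structure of co-Hopfian groups'' does not close it, since it is not an off-the-shelf fact for mixed groups that co-Hopficity forces each $T_p$ to be finitely cogenerated. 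Second, you explicitly leave open the case where $G/T$ is divisible of infinite torsion-free rank; as you yourself observe, co-Hopficity cannot detect this failure (witness $\prod_p\Z(p)$), and exhibiting a subgroup $N$ together with a non-surjective injection $G\to G/N$ in that case is precisely the substance of the theorem there. In sum, the proposal establishes sufficiency but leaves necessity unproved in two of its three failure modes.
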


\begin{theorem}\label{Keef2} (\cite[Theorem 2.5]{K})
The group $G$ is generalized co-Bassian if, and only if, one of the following two conditions holds:

(a) $G$ is divisible;

(b) $G/T$ is torsion-free divisible of finite rank and, for each prime $p$, $T_p$ has generalized finite $p$-rank.
\end{theorem}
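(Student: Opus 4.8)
The plan is to prove the two implications, treating the structural alternatives (a) and (b) separately. For sufficiency, alternative (a) is immediate: if $G$ is divisible, then for any $N\le G$ and any injection $\phi\colon G\to G/N$ the image $\phi(G)$ is divisible, hence a direct summand of $G/N$. So assume (b) and fix $N\le G$ together with an injection $\phi\colon G\to G/N$. First I would carry out a torsion-free rank count: since $G/T\cong\Q^{(n)}$ we have $r_0(G)=n$, and then $n=r_0(\phi(G))\le r_0(G/N)\le r_0(G)=n$ forces $r_0(G/N)=n$, so $N\le T$. Consequently $T(G/N)=T/N$, the quotient $(G/N)/(T/N)\cong G/T$ is again torsion-free divisible of rank $n$, and $\phi$ induces an injective---hence bijective---endomorphism of $\Q^{(n)}$; therefore $G/N=\phi(G)+T/N$ with $\phi(G)\cap(T/N)=\phi(T)$. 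A short argument should then show that $\phi(G)$ is a direct summand of $G/N$ precisely when $\phi(T)$ is a direct summand of $T/N$, which reduces the whole problem to torsion groups.

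Splitting $T=\bigoplus_pT_p$, $N=\bigoplus_pN_p$ and $\phi=\bigoplus_p\phi_p$ over the primes, it then remains to prove: if $T_p$ has generalized finite $p$-rank, then every injective image $\phi_p(T_p)\le T_p/N_p$ is a direct summand. Writing $T_p=D_p\oplus B_p$ with $D_p$ divisible and $B_p$ a reduced complement, the subgroup $\phi_p(D_p)$ is divisible and hence a summand of $T_p/N_p$; factoring it out (a subgroup containing the divisible group we quotient by splits iff its image downstairs does), one would be left to verify that the image of $\phi_p(B_p)$ splits off in the residual quotient. This last point is exactly where the hypothesis ``generalized finite $p$-rank'' is used, the argument resting on purity---bounded pure subgroups split---and on matching Ulm--Kaplansky invariants; I expect this torsion-group core to be the most delicate step.

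For necessity I would argue contrapositively: assuming $G$ is neither divisible nor of type (b), I would produce $N\le G$ and an injection $\phi\colon G\to G/N$ with non-split image. If $G/T$ is not torsion-free divisible of finite rank, then either $G/T$ fails $q$-divisibility for some prime $q$---and then a multiplication-by-$q$ map furnishes an injection $G\to G/N$ with non-summand image---or $r_0(G)$ is infinite while $G$ carries a nonzero reduced $p$-torsion element, in which case killing a suitable copy of $\Z$ inside $G$ manufactures a copy of $\qc$ in the quotient, into whose socle the reduced torsion element can be injected, producing a non-split injection $G\to G/N$. If instead some $T_p$ lacks generalized finite $p$-rank, then $T_p$ has an unbounded reduced part, or a reduced part of infinite rank, or an infinite-rank divisible part together with a nonzero reduced part; in each case a ``shift'' construction should work. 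For instance, for an unbounded $\bigoplus_k\Z(p^{n_k})$ one factors out a cyclic subgroup and sends $c_k\mapsto p\,c_{k+1}$, obtaining image $\bigoplus_j p\Z(p^{n_j})$, which is not a summand by an Ulm--Kaplansky count; and for $\qc^{(\omega)}\oplus\langle c\rangle$ one factors out $\langle t-c\rangle$ with $t$ a socle element of the first $\qc$ and $c$ a generator of $\Z(p)$, collapsing the quotient to $\qc^{(\omega)}$, then shifts so that $c$ lands in the socle of some $\qc$, whose order-$p$ subgroup is not a summand of $\qc$. Each such example lifts to an injection $G\to G/N$ with non-split image. The principal obstacle, in both directions, is thus the $p$-group bookkeeping establishing that ``generalized finite $p$-rank'' is precisely the dividing line.
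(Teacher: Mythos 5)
First, note that the paper you are working against does not prove this statement at all: Theorem~\ref{Keef2} is quoted verbatim from \cite[Theorem 2.5]{K}, so there is no internal proof to compare with. Judged on its own terms, your global reduction is sound and is essentially the standard one: the rank count $n=r_0(\phi(G))\le r_0(G/N)\le r_0(G)=n$ forcing $N\le T$, the induced bijective endomorphism of $G/T\cong\Q^{(n)}$, the identities $G/N=\phi(G)+T/N$ and $\phi(G)\cap(T/N)=\phi(T)$, the observation that a complement of $\phi(T)$ in $T/N$ is automatically a complement of $\phi(G)$ in $G/N$, and the primary decomposition of $T$, $N$ and $\phi|_T$ all check out and correctly reduce the problem to $p$-groups.

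However, there are two genuine gaps, both located exactly where the content of the theorem lives. (i) The sufficiency core --- ``if $T_p$ has generalized finite $p$-rank, then every injective image of $T_p$ in a quotient $T_p/N_p$ splits off'' --- is not proved; you defer it to ``purity and Ulm--Kaplansky invariants,'' but this is the whole point of condition (b) and cannot be waved through: for instance $\Z(p^n)^{(\omega)}$ has generalized finite $p$-rank while $\Z(p)\oplus\Z(p^n)^{(\omega)}$ for $n\ge 2$ does not, and your sketch never engages with the actual definition closely enough to explain why the dividing line falls precisely there. (ii) In the necessity direction the case analysis rests on an unverified reformulation of ``fails to have generalized finite $p$-rank,'' and at least one construction breaks as stated: if $G/T$ is not $q$-divisible but $T_q\neq 0$, then multiplication by $q$ is not injective on $G$, so it does not furnish an injection $G\to G/N$ and that case needs a different witness. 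Your shift construction on an unbounded $\bigoplus_k\Z(p^{n_k})$ is correct (the image fails purity), and counterexamples do lift from direct summands, but you would still have to prove that every $T_p$ lacking generalized finite $p$-rank actually contains one of your listed configurations as a summand. As it stands, the write-up is a plausible skeleton of Keef's argument with the hard middle missing.
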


In this aspect, the organization of our further work is planned thus: In the next section, we state and prove our main results distributed into two subsections, which treated the hereditary case and the super case, respectively. We establish four theorems concerning these two properties for the co-Bassian and generalized co-Bassian groups (see, respectively, Theorems~\ref{first} and \ref{second} as well as Theorems~\ref{third} and \ref{fourth}).

\section{Principal Results}

Our basic results are stated and proved in the next two subsequent subsections.

\subsection{The Hereditary Property in Generalized co-Bassian Groups}

The first of our two main establishments in this section is the following one.

\begin{theorem}\label{first} For a group $G$, the following three conditions are equivalent:

(a) $G$ is hereditarily co-Bassian;

(b) $G$ does not have a subgroup $A\leq G$ such that $A\cong \Z$ or $A\cong \Z(p)^{(\omega)}$ for any prime $p$;

(c) $G$ is a torsion group such that $T_p$ is finitely co-generated for all primes $p$.
\end{theorem}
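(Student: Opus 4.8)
The plan is to establish the cycle of implications (a) $\Rightarrow$ (b) $\Rightarrow$ (c) $\Rightarrow$ (a), leaning on Theorem~\ref{Keef1} at the two ends. The organising observation is that when $H$ is a torsion group, $H/T(H)=0$ is divisible of (finite) torsion-free rank $0$, so Theorem~\ref{Keef1} specialises to the statement that \emph{a torsion group is co-Bassian precisely when each of its primary components has finite $p$-rank}; and for a $p$-group this last condition is exactly ``finitely co-generated'', i.e.\ finiteness of the socle $T_p[p]$. Thus most of the work is simply tracking how this condition behaves under passage to subgroups and recognising the two forbidden subgroups.

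For (a) $\Rightarrow$ (b) I argue contrapositively: it suffices to exhibit, for $A\cong\Z$ and for $A\cong\Z(p)^{(\omega)}$, a witness that $A$ is not co-Bassian, since then any $G$ containing such an $A$ fails to be hereditarily co-Bassian. For $A\cong\Z$ take $N=0$ and $\phi\colon\Z\to\Z$ multiplication by $2$, which is injective but not onto. For $A\cong\Z(p)^{(\omega)}$ one may again take $N=0$ together with a shift embedding, or simply invoke Theorem~\ref{Keef1}, since $\Z(p)^{(\omega)}$ is a $p$-group of infinite $p$-rank.

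For (b) $\Rightarrow$ (c): absence of a subgroup isomorphic to $\Z$ forces every element of $G$ to have finite order, so $G$ is torsion and $G=\bigoplus_p T_p$. If some $T_p$ were not finitely co-generated, its socle $T_p[p]$ would be an infinite elementary $p$-group and hence would contain a copy of $\Z(p)^{(\omega)}$, contradicting (b). For (c) $\Rightarrow$ (a): given $H\le G$, the group $H$ is again torsion with primary components $H_p=H\cap T_p$, and $H_p[p]\le T_p[p]$ is finite, so each $H_p$ has finite $p$-rank; by the torsion specialisation of Theorem~\ref{Keef1} noted above, $H$ is co-Bassian. Since $H$ was arbitrary, (a) follows.

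I do not anticipate a genuine obstacle here: once Theorem~\ref{Keef1} is available the whole argument is short. The only points needing a word of care are the standard structural facts that a $p$-group is finitely co-generated iff its $p$-socle is finite iff it has finite $p$-rank (so that condition (c) really matches the hypothesis of Theorem~\ref{Keef1}), and that an infinite elementary $p$-group visibly contains $\Z(p)^{(\omega)}$; both are routine.
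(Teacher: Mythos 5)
Your proposal is correct and follows essentially the same route as the paper: reduce via Theorem~\ref{Keef1} to the finiteness of the $p$-socles, identify $\Z$ and $\Z(p)^{(\omega)}$ as the two forbidden subgroups, and note that the relevant conditions pass to subgroups. The only cosmetic difference is that you arrange the argument as a cycle and supply explicit witnesses (multiplication by $2$, the shift map) for the non-co-Bassian-ness of the two forbidden subgroups, which the paper merely asserts.
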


\begin{proof} Unsurprisingly, the following argument is based up Theorem~\ref{Keef1}.

Suppose (b) fails; we verify that (a) must fail, as well. If $A\leq G$ with $A\cong \Z$, then since $\Z$ is not co-Bassian, $G$ must fail to be hereditarily co-Bassian. Similarly, since $A\cong \Z(p)^{(\omega)}$ also fails to be co-Bassian, $G$ cannot have a subgroup of that form either, as expected.

Suppose now (b) holds. Note that $G$ is a torsion group if, and only if, it does not have a subgroup $A\leq G$ such that $A\cong \Z$. And the $p$-subgroup $T_p$ is finitely co-generated if, and only if, $G$ does not have a subgroup  $A\leq G$ such that $A\cong \Z(p)^{(\omega)}$, which shows that (b) is equivalent to (c), as promised.

Finally, if (c) holds, then any subgroup $H\leq G$ will also be torsion and, if $S_p$ is the $p$-torsion subgroup of $H$, then $S_p\leq T_p$ shows that $S_p$ is also finitely co-generated. Therefore, all such $H$ will also be co-Bassian, so that $G$ is hereditarily co-Bassian, as stated.
\end{proof}

Our next major achievement is the following criterion.

\begin{theorem}\label{second} For a group $G$, the following three conditions are equivalent:

(a) $G$ is hereditarily generalized co-Bassian;

(b) $G$ does not have a subgroup $A\leq G$ such that $A\cong \Z$ or $A\cong \Z(p^2)^{(\omega)}$ for any prime $p$;

(c) $G$ is a torsion group such that $pT_p$ is finitely co-generated for all primes $p$.
\end{theorem}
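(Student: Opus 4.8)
The plan is to mirror closely the structure of the proof of Theorem~\ref{first}, but now leaning on Theorem~\ref{Keef2} instead of Theorem~\ref{Keef1}, and paying attention to the ``generalized finite $p$-rank'' condition. The statement to be proved is Theorem~\ref{second}: for a group $G$, the conditions (a) $G$ is hereditarily generalized co-Bassian; (b) $G$ has no subgroup $A$ with $A\cong\Z$ or $A\cong\Z(p^2)^{(\omega)}$; and (c) $G$ is torsion with $pT_p$ finitely co-generated for all $p$, are equivalent. I would prove $\neg$(b)$\Rightarrow\neg$(a), then (b)$\Leftrightarrow$(c), then (c)$\Rightarrow$(a).

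\medskip

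First I would establish $\neg$(b)$\Rightarrow\neg$(a). If $A\le G$ with $A\cong\Z$, then $\Z$ is not generalized co-Bassian (it is not divisible, and $\Z/T=\Z$ is not torsion-free divisible of finite rank — indeed $\Z$ is co-Hopfian-free, or more directly apply Theorem~\ref{Keef2}), so $G$ is not hereditarily generalized co-Bassian. Similarly I would check that $B:=\Z(p^2)^{(\omega)}$ is not generalized co-Bassian: $B$ is reduced so (a) of Theorem~\ref{Keef2} fails, and $B=T=T_p$ while the $p$-rank of $B$ is infinite; the point is that $B$ does not have generalized finite $p$-rank (since $pB\cong\Z(p)^{(\omega)}$ is unbounded of infinite rank, whereas a group of generalized finite $p$-rank must have $pT_p$, or the relevant bounded-quotient invariant, finite — this is exactly the quantity appearing in (c)). Hence any $G$ containing such an $A$ fails (a). The one delicate point here is quoting the precise definition of ``generalized finite $p$-rank'' from \cite{K} and confirming $\Z(p^2)^{(\omega)}$ violates it while $\Z(p)^{(\omega)}$ does not; this is the crux that distinguishes Theorem~\ref{second} from Theorem~\ref{first}.

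\medskip

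Next, (b)$\Leftrightarrow$(c): as in Theorem~\ref{first}, $G$ is torsion iff it has no subgroup isomorphic to $\Z$. For the second half, I would argue that $pT_p$ is finitely co-generated iff $G$ has no subgroup isomorphic to $\Z(p^2)^{(\omega)}$. If $pT_p$ is not finitely co-generated, then (being a $p$-group) it contains a copy of $\Z(p)^{(\omega)}$ sitting inside $pT_p$, and one lifts this to a copy of $\Z(p^2)^{(\omega)}\le T_p$ — concretely, a direct sum of $\omega$ cyclic subgroups each of order $p^2$, using that each generator of the $\Z(p)^{(\omega)}$-piece is divisible by $p$ in $T_p$, together with a purification/independence argument to keep the sum direct. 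Conversely, if $A\cong\Z(p^2)^{(\omega)}\le G$ then $pA\cong\Z(p)^{(\omega)}\le pT_p$, so $pT_p$ is not finitely co-generated. I expect this lifting step — producing an honest direct summand $\Z(p^2)^{(\omega)}$ from the hypothesis that $pT_p$ is large — to be the main obstacle requiring care, though it is standard $p$-group manipulation (one may invoke that an unbounded or infinite-rank situation in $pT_p$ forces $T_p[p^2]/T_p[p]$, equivalently $pT_p[p]$, to be infinite, yielding the desired independent elements of order $p^2$).

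\medskip

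Finally, (c)$\Rightarrow$(a): assume $G$ is torsion with $pT_p$ finitely co-generated for all $p$, and let $H\le G$. Then $H$ is torsion, and writing $S_p$ for its $p$-component we have $S_p\le T_p$, hence $pS_p\le pT_p$, so $pS_p$ is also finitely co-generated. It remains to observe that a torsion group $H$ with $pS_p$ finitely co-generated for every $p$ is generalized co-Bassian: since $H$ is torsion, $H/T_H=0$ is (trivially) torsion-free divisible of finite (namely zero) rank, and finite co-generation of $pS_p$ is precisely the statement that $S_p$ has generalized finite $p$-rank; thus condition (b) of Theorem~\ref{Keef2} holds for $H$. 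Therefore every subgroup $H$ of $G$ is generalized co-Bassian, i.e. $G$ is hereditarily generalized co-Bassian, completing the cycle of implications.
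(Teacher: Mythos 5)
There is a genuine error at the crux you yourself flagged. Your argument that $B=\Z(p^2)^{(\omega)}$ fails to be generalized co-Bassian rests on the claim that a $p$-group of generalized finite $p$-rank must have $pB$ finitely co-generated (or finite). That is not the notion of generalized finite $p$-rank in \cite{K}: homocyclic groups $\Z(p^n)^{(\omega)}$ \emph{do} have generalized finite $p$-rank and \emph{are} generalized co-Bassian. (If your characterization were correct, condition (c) of Theorem~\ref{second} would simply say that each $T_p$ has generalized finite $p$-rank, and then for torsion groups ``hereditarily generalized co-Bassian'' would collapse to ``generalized co-Bassian,'' which is exactly what the theorem is showing does not happen; it would also contradict Theorem~\ref{fourth}, whose proof treats $X=\Z(p^n)^{(\omega)}$ as generalized co-Bassian but not \emph{super} generalized co-Bassian.) So your $\neg$(b)$\Rightarrow\neg$(a) step does not go through as written: containing a copy of $\Z(p^2)^{(\omega)}$ does not directly exhibit a non--generalized-co-Bassian subgroup, because that copy itself is generalized co-Bassian.

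The repair is the paper's one extra move: inside $A\cong\Z(p^2)^{(\omega)}$ take a subgroup $A'\cong\Z(p)\oplus\Z(p^2)^{(\omega)}$; it is this \emph{mixed-exponent} group that fails to have generalized finite $p$-rank, hence is not generalized co-Bassian, so $G$ is not hereditarily generalized co-Bassian. The same misconception reappears in your (c)$\Rightarrow$(a) step, where you assert that finite co-generation of $pS_p$ is ``precisely'' generalized finite $p$-rank of $S_p$; only the implication you actually need (finitely co-generated $pS_p$ implies $S_p$ has generalized finite $p$-rank) is true, the converse being refuted by $\Z(p^2)^{(\omega)}$ again. Your (b)$\Leftrightarrow$(c) equivalence and the lifting of an infinite socle of $pT_p$ to an independent family of order-$p^2$ elements are fine and match the paper.
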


\begin{proof} We utilize in our argumentation the characterization in Theorem~\ref{Keef2}.

Suppose (b) fails. If $\Z\cong A\leq G$, then $A$ is not generalized co-Bassian, so that $G$ is not hereditarily generalized co-Bassian. Next, if $$\Z(p^2)^{(\omega)}\cong A\leq G,$$ then it is clear that there is a subgroup $A'\leq A$ such that $$A'\cong \Z(p)\oplus \Z(p^2)^{(\omega)}.$$ However, this $A'$ will not have generalized finite $p$-rank, so that it is not generalized co-Bassian, so that $G$ again fails to be hereditarily generalized co-Bassian, as asked for.

Suppose now (b) holds. Note that $G$ is a torsion group if, and only if, it does not have a subgroup $A\leq G$ such that $A\cong \Z$. And $pT_p$ is finitely co-generated if, and only if, $(pT_p)[p]$ is finite if, and only if, $G$ does not have a subgroup $A\leq G$ such that $A\cong \Z(p^2)^{(\omega)}$, which shows that (b) is equivalent to (c), as formulated.

Finally, if (c) holds, then any subgroup $H\leq G$ will also be torsion and, if $S_p$ is the $p$-torsion subgroup of $H$, then $pS_p\leq pT_p$ shows that $pS_p$ is also finitely co-generated. Therefore, all such $H$ will also be generalized co-Bassian, so that $G$ is hereditarily generalized co-Bassian, as wanted.
\end{proof}

\subsection{The Super Property in Generalized co-Bassian Groups}

We start with some elementary observations about $p$-ranks. If $G$ is group with torsion part $T$, then, for all primes $p$, we know that $G$, $T$ and $T_p$ all have the same $p$-rank. 

\medskip

We also prove the following technicality.

\begin{lemma}\label{ranks}
Suppose $G$ is a group of finite $p$-rank for some prime $p$ and finite torsion-free rank. If $K\leq G$ and $H=G/K$, then $H$ also has finite torsion-free rank and finite $p$-rank.
\end{lemma}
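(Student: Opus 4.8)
The plan is to handle the torsion-free rank and the $p$-rank separately, throughout exploiting the short exact sequence $0\to K\to G\to H\to 0$; recall that the $p$-rank of a group $A$ is the dimension of $A[p]$ over $\Z/p\Z$. The torsion-free rank is the easy half: since $\Q$ is flat (it is torsion-free and divisible), the functor $\Q\otimes_{\Z}-$ keeps the sequence exact, so the torsion-free rank of $H$ is at most that of $G$, hence finite (in fact it equals $r_0(G)-r_0(K)$). The whole content of the lemma lies in the $p$-rank.

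The crux is the following auxiliary principle: \emph{if $L$ is any group of finite torsion-free rank and finite $p$-rank, then $L/pL$ is finite.} Granting this, observe that $K\leq G$ has finite torsion-free rank (at most $r_0(G)$, again because $\Q$ is flat) and finite $p$-rank (since $K[p]\leq G[p]$), so $K/pK$ is finite; and tensoring $0\to K\to G\to H\to 0$ with $\Z/p\Z$ gives the exact six-term sequence
$$0\to K[p]\to G[p]\to H[p]\to K/pK\to G/pG\to H/pH\to 0,$$
whence $H[p]$, being an extension of a subgroup of $K/pK$ by a quotient of $G[p]$, is finite; that is, $H$ has finite $p$-rank.

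To prove the auxiliary principle, let $S$ denote the torsion subgroup of $L$ and $S_p$ its $p$-component. Then $S_p[p]\leq L[p]$ is finite, so $S_p$ is a $p$-group with finite socle; write $S_p=D\oplus R$ with $D$ its maximal divisible subgroup, necessarily of finite rank since $D[p]$ is finite, and $R$ reduced. Here $R$ is a reduced $p$-group with finite socle, hence bounded, hence finite, so $S_p/pS_p\cong R/pR$ is finite; and since $S_q$ is $p$-divisible for every prime $q\neq p$, we get $S/pS\cong S_p/pS_p$ finite. Finally $L/S$ is torsion-free of some finite rank $m$, and in a torsion-free group of rank $m$ any $m+1$ elements satisfy a $\Z$-linear relation with coprime coefficients and therefore remain dependent modulo $p$, so $(L/S)/p(L/S)$ is finite of dimension $\leq m$; feeding these bounds into the right-exact sequence $S/pS\to L/pL\to(L/S)/p(L/S)\to 0$ shows $L/pL$ is finite.

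The step I expect to be the main obstacle is precisely this bound on $L/pL$ (applied to $L=K$): the functor $A\mapsto A[p]$ is monotone on subgroups but \emph{not} on quotients, so $H[p]$ cannot be estimated head-on, and it is the structure of $p$-groups with finite socle, together with the elementary dependence bound for finite-rank torsion-free groups, that turns the six-term exact sequence into the finiteness we want.
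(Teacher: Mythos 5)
Your proof is correct and follows essentially the same route as the paper's: both hinge on the six-term sequence obtained by tensoring $0\to K\to G\to H\to 0$ with $\Z/p\Z$ (the paper writes its relevant portion as $0\to K[p]\to T_p[p]\to S_p[p]\to K/pK$), and both reduce everything to the finiteness of $K/pK$, proved by splitting off the torsion part and treating the finite-rank torsion-free part separately. The only cosmetic difference is that the paper gets $K/pK$ finite by localizing at $p$ and decomposing $K_{(p)}$ as a direct sum of its torsion subgroup and a torsion-free complement, whereas you avoid localization and instead use the right-exact sequence $S/pS\to K/pK\to (K/S)/p(K/S)\to 0$; the underlying content is identical.
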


\begin{proof}
It is obvious that $H$ has finite torsion-free rank. Let $S$ be the torsion subgroup of $H$; we need to show $S_p[p]$ is finite. There is a sequence
$$
0 \to K[p]\to T_p[p]\to S_p[p]\to K/pK .
$$
We claim that $K/pK$ is finite: to that end, let $K_{(p)}$ be the localization at $p$, so that $$K/pK\cong K_{(p)}/pK_{(p)}.$$ If $R$ is the torsion subgroup of $K_{(p)}$, it follows that $R$ has finite $p$-rank whence $K_{(p)}=R\oplus X$ for some torsion-free $X$ of finite torsion-free rank. Therefore, it must be that
$$
          K_{(p)}/pK_{(p)}\cong (R/pR)\oplus (X/pX),
$$
and since both of these latter terms are finite, so is $K/pK\cong K_{(p)}/pK_{(p)}$.

Thus, since $K[p]$ and $T_p[p]$ are finite, it follows that $S_p[p]$ must also be finite, as desired.
\end{proof}

The first of our two main establishments in this section is the following one. It is again based upon Theorem~\ref{Keef1}.

\begin{theorem}\label{third} A group is super co-Bassian if, and only if, it is co-Bassian.
\end{theorem}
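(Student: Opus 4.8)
The plan is to prove the substantial implication, the converse being trivial: $G$ is an epimorphic image of itself (take the quotient by the zero subgroup), so if every epimorphic image of $G$ is co-Bassian then $G$ itself is co-Bassian.

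Conversely, assume $G$ is co-Bassian, fix a subgroup $K\leq G$, put $H=G/K$, and let $T$ and $S$ denote the torsion subgroups of $G$ and $H$, respectively; I would verify that $H$ satisfies the two conditions of Theorem~\ref{Keef1}. For the torsion-free quotient: the subgroup $(K+T)/K$ of $H$ is torsion, being a homomorphic image of $T$, hence it is contained in $S$, so $H/S$ is an epimorphic image of $H/\big((K+T)/K\big)\cong G/(K+T)$. The latter is itself an epimorphic image of $G/T$, which by hypothesis is divisible of finite torsion-free rank; therefore $G/(K+T)$, and hence $H/S$, is divisible, and its torsion-free rank is bounded by that of $G/T$ since torsion-free rank does not increase when passing to quotients. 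Thus $H/S$ is torsion-free divisible of finite rank.

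For the $p$-ranks: fix a prime $p$. Since $G$ is co-Bassian, $T_p$ has finite $p$-rank --- equivalently $G$ has finite $p$-rank --- and $G/T$, hence $G$, has finite torsion-free rank. Now Lemma~\ref{ranks} applies to $K\leq G$ and delivers exactly that $H=G/K$ has finite $p$-rank, i.e.\ $S_p$ has finite $p$-rank; as $p$ is arbitrary, this holds for all primes. Consequently $H$ meets both hypotheses of Theorem~\ref{Keef1}, so $H$ is co-Bassian, and since $K$ was arbitrary, $G$ is super co-Bassian.

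The only genuine content is the control of the $p$-rank of a quotient, and that difficulty has already been quarantined in Lemma~\ref{ranks} (via the localization argument showing $K/pK$ is finite); everything else is routine bookkeeping with divisible groups and torsion-free ranks, so I anticipate no real obstacle in assembling the argument.
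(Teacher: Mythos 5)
Your proof is correct and follows essentially the same route as the paper's: reduce to checking the two conditions of Theorem~\ref{Keef1} for a quotient $H=G/K$, handling the torsion-free part by noting $H/S$ is an epimorphic image of $G/T$ and the $p$-ranks via Lemma~\ref{ranks}. You merely spell out in detail the step the paper dismisses with ``clearly,'' which is a welcome addition rather than a divergence.
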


\begin{proof} Assume $G$ is co-Bassian, and suppose $\pi:G\to H$ is an epimorphism with kernel $K$ and $S$ is the torsion subgroup of $H$. Clearly, $H/S$ will be divisible of finite rank, and Lemma~\ref{ranks} applies to get that, for all prime $p$, $H$ has finite $p$-rank. Consequently, $H$ is generalized co-Bassian and so in view of Theorem~\ref{Keef1} the group $G$ is super generalized co-Bassian, as claimed.

The converse claim is obvious, so we are done.
\end{proof}

Our next major achievement is the following criterion. Of course, it is based upon Theorem~\ref{Keef2}.

\begin{theorem}\label{fourth} The group $G$ is super generalized co-Bassian if, and only if, one of the following two conditions holds:

(a) $G$ is divisible;

(b) $G/T$ is torsion-free divisible of finite rank and, for each prime $p$, $T_p$ is either divisible, or $pT_p$ has finite $p$-rank.
\end{theorem}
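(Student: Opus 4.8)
The plan is to characterize super generalized co-Bassian groups by reducing to the criterion in Theorem~\ref{Keef2} and analyzing what happens to the $p$-torsion under passage to quotients. First I would dispose of the trivial direction: if $G$ is divisible, every quotient is divisible, hence generalized co-Bassian, so (a) suffices; and conversely, if $G$ itself is generalized co-Bassian (being a quotient of itself) and not divisible, then by Theorem~\ref{Keef2} we know $G/T$ is torsion-free divisible of finite rank and each $T_p$ has generalized finite $p$-rank, so the real content is in upgrading ``generalized finite $p$-rank of $T_p$'' to ``$T_p$ divisible or $pT_p$ of finite $p$-rank'' once we demand that \emph{all} quotients be generalized co-Bassian.

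For the forward direction, assume $G$ is super generalized co-Bassian and not divisible. Then $G$ is generalized co-Bassian, so condition (b) of Theorem~\ref{Keef2} holds for $G$ itself; in particular $G/T$ is torsion-free divisible of finite rank, which is the first half of (b) here. The work is to show that for each prime $p$ with $T_p$ not divisible, $pT_p$ has finite $p$-rank. I would argue by contraposition: if $T_p$ is reduced (or more precisely non-divisible) and $pT_p$ has infinite $p$-rank, I would exhibit a quotient of $G$ that fails to be generalized co-Bassian. The natural candidate is a quotient isomorphic (in its $p$-primary part) to something like $\Z(p)\oplus\Z(p^2)^{(\omega)}$, echoing the construction in the proof of Theorem~\ref{second}: since $pT_p$ has infinite $p$-rank, $T_p$ contains a subgroup isomorphic to $\Z(p^2)^{(\omega)}$, and since $T_p$ is non-divisible one can arrange, after factoring out a suitable subgroup, a summand of type $\Z(p)$ sitting alongside it, destroying generalized finite $p$-rank. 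One must take care that factoring out the chosen subgroup of $G$ does not also kill the torsion-free divisible quotient condition or inadvertently make the relevant $p$-component divisible; Lemma~\ref{ranks} and the divisibility of $G/T$ help keep the torsion-free part under control.

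For the converse direction, assume (b) holds and let $\pi:G\to H$ be an epimorphism with kernel $K$; write $S$ for the torsion subgroup of $H$. Then $H/S$ is a quotient of the torsion-free divisible finite-rank group $G/T$ (modulo torsion), hence torsion-free divisible of finite rank, so $H$ satisfies the first clause of Theorem~\ref{Keef2}(b). It remains to check that $S_p$ has generalized finite $p$-rank for each $p$, i.e.\ that $pS_p$ has finite $p$-rank. Fix $p$; if $T_p$ is divisible then its image contributes a divisible $p$-group and one checks $S_p$ is then divisible-plus-controlled, so $pS_p[p]$ is finite; if instead $pT_p$ has finite $p$-rank, I would run an exact-sequence argument in the spirit of Lemma~\ref{ranks}, using a sequence relating $K[p]$, $(pT_p)[p]$, $(pS_p)[p]$ and $K/pK$ (with the finiteness of $K/pK$ coming from the finite $p$-rank and finite torsion-free rank of $K\leq G$, exactly as in that lemma), to conclude $(pS_p)[p]$ is finite. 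Assembling the two cases over all primes $p$ gives that $H$ satisfies Theorem~\ref{Keef2}(b), so $H$ is generalized co-Bassian, and since $\pi$ was arbitrary, $G$ is super generalized co-Bassian.

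The main obstacle I anticipate is the bookkeeping in the forward direction: constructing an explicit quotient that witnesses the failure of generalized co-Bassian when $pT_p$ has infinite $p$-rank while $T_p$ is non-divisible, and verifying that this quotient genuinely fails condition~(b) of Theorem~\ref{Keef2} — one has to produce the offending $\Z(p)$ summand alongside the $\Z(p^2)^{(\omega)}$ without accidentally producing a divisible $p$-component, and simultaneously preserve the torsion-free-divisible-of-finite-rank quotient. The secondary technical point is the precise exact sequence and localization argument needed in the converse when $pT_p$ has finite $p$-rank but $T_p$ itself need not, which is a mild refinement of Lemma~\ref{ranks} applied to $pT_p$ rather than $T_p$.
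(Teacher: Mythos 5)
Your overall plan coincides with the paper's: both directions reduce to Theorem~\ref{Keef2}; the easy implication for divisible $G$ is identical; and your converse is essentially the paper's argument verbatim (split into the cases ``$T_p$ divisible'' and ``$pT_p$ of finite $p$-rank,'' use $p$-divisibility of $G$ in the first case, and apply Lemma~\ref{ranks} to the induced epimorphism $pG\to pH$ in the second). The problem is the forward direction, at exactly the point you flag as the main obstacle: the route you sketch there does not work as stated. You propose to start from a \emph{subgroup} of $T_p$ isomorphic to $\Z(p^2)^{(\omega)}$ (obtained from $pT_p$ having infinite $p$-rank) and then ``arrange,'' by factoring out a suitable subgroup, a $\Z(p)$ sitting alongside it. But subgroups give no purchase on quotients, and the subgroup datum by itself carries no information here: $\Z(p^\infty)^{(\omega)}$ contains a copy of $\Z(p^2)^{(\omega)}$, yet every quotient of it is divisible and hence generalized co-Bassian. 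Even adding non-divisibility of $T_p$, the step ``one can arrange a $\Z(p)$ summand alongside a surviving non-divisible $\Z(p^2)^{(\omega)}$ in some quotient'' is precisely the assertion that needs proof, and nothing in your sketch supplies it.

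The missing ingredient is the hypothesis you extract at the start and then set aside: since $G$ is itself generalized co-Bassian, each $T_p$ has generalized finite $p$-rank, and the structure of such groups (from \cite{K}) shows that if in addition $T_p$ is not divisible and $pT_p$ has infinite $p$-rank, then $T_p$ --- and hence $G$ --- has a \emph{direct summand} $X\cong\Z(p^n)^{(\omega)}$ with $1<n<\infty$. Direct summands, unlike arbitrary subgroups, have all of their quotients realized as quotients of $G$ (compose with the projection), and the single explicit epimorphism $X\to\Z(p)\oplus X$ then yields a quotient of $G$ whose $p$-torsion lacks generalized finite $p$-rank; this disposes of the forward direction with none of the bookkeeping about the torsion-free part that you anticipate. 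A smaller point in your converse: your parenthetical ``i.e.\ that $pS_p$ has finite $p$-rank'' is not an equivalence with ``$S_p$ has generalized finite $p$-rank'' (the whole theorem turns on these notions differing); what is actually used, there and in the paper, is only the one implication that finite $p$-rank of $pS_p$, or divisibility of $S_p$, suffices for generalized finite $p$-rank.
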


\begin{proof} Suppose first that $G$ is super generalized co-Bassian. In particular, it is generalized co-Bassian, so in virtue of Theorem~\ref{Keef2} it is either divisible (and thus (a) holds), or else $G/T$ is divisible of finite rank and, for each prime $p$, $T_p$ has generalized finite $p$-rank.  If, however, we assume (b) failed, then one knows that, for some prime $p$, the subgroup $T_p$ (and hence the whole group  $G$) would have a direct summand of the form $X=\Z(p^n)^{(\omega)}$, where $1<n<\infty$. So, if we can show $X$ is not super generalized co-Bassian, the same will hold for $G$, as required.

Note that, if $Y=\Z(p)\oplus X$, then it is easy to see that there is an epimorphism $X\to Y$, but since $Y$ does not have generalized finite $p$-rank, it cannot be generalized co-Bassian referring to Theorem~\ref{Keef2}. Therefore, $X$, and hence $G$, cannot be super generalized co-Bassian, contrary to our assumption.

Conversely, suppose $G$ satisfies (a) or (b). If it is divisible, then any epimorphic image of $G$ is also divisible, and hence generalized co-Bassian, i.e., a divisible group is evidently super generalized co-Bassian.

So, assume $G$ satisfies (b), $K\leq G$ and $H=G/K$ with torsion $S$. If $T_p$ is divisible, it follows that $T$ is $p$-divisible. And since $G/T$ is divisible, it is also $p$-divisible. Consequently, $G$ must be $p$-divisible, so that the same holds for $H$. Therefore, $S_p$ must be divisible, an hence of generalized finite $p$-rank.

On the other hand, if $pT_p$ has finite $p$-rank, then it is easily seen that there is an epimorphism $pG\to pH$. Invoking Lemma~\ref{ranks}, $pH$ will have finite $p$-rank, so that $pS$ does, as well.

It, thus, follows that each $S_p$ will have generalized finite $p$-ranks for all primes $p$. That is why, all such epimorphic images $H$ of $G$ are themselves generalized co-Bassian, so that $G$ is super generalized co-Bassian, as asserted.
\end{proof}

%\section{Open Questions}

%...

%\medskip	

%\noindent{\bf Problem 1.} ...

%\medskip
	
%\noindent{\bf Problem 2.} ...

\vskip3.0pc

\end{document}